\theoremstyle{plain}
\renewcommand\section{\@startsection {section}{1}{\z@}
{-30pt \@plus -1ex \@minus -.2ex}
{2.3ex \@plus.2ex}
{\normalfont\normalsize\bfseries}}
\renewcommand\subsection{\@startsection{subsection}{2}{\z@}
{-3.25ex\@plus -1ex \@minus -.2ex}
{1.5ex \@plus .2ex}
{\normalfont\normalsize\bfseries}}
\renewcommand{\@seccntformat}[1]{\csname the#1\endcsname. }
\newtheorem{theorem}{Theorem}
\newtheorem{corollary}{Corollary}
\newtheorem{remark}{Remark}
\newtheorem{example}{Example}
\def\stirling#1#2{\genfrac{\{}{\}}{0pt}{}{#1}{#2}}
\title{Restricted lonesum matrices}
\author{Be\'{a}ta B\'{e}nyi\\
\small Faculty of Water Sciences, National University of Public Service, Budapest\\
\small\tt beata.benyi@gmail.com\\
}
\date{\today\\
\small Keywords: enumeration, poly-Bernoulli numbers, restricted Stirling numbers, lonesum matrices\\
\small Mathematics Subject Classification: 05A05, 05A15, 05A18, 05A19}
\begin{document}
\maketitle
\setlength{\unitlength}{0.5cm}
\begin{abstract}
Lonesum matrices are matrices that are uniquely reconstructible from their row and column sum vectors. These matrices are enumerated by the poly-Bernoulli numbers; a sequence related to the multiple zeta values with a rich literature in number theory. Lonesum matrices are in bijection with many other combinatorial objects: several permutation classes, matrix classes, acyclic orientations in complete bipartite graphs etc. Motivated of these facts, we study in this paper lonesum matrices with restriction on the number of columns and rows of the same type. 
\end{abstract}
\section{Introduction}


Lonesum matrices are $01$ matrices that are uniquely reconstructible from their row and column sum vectors. They play in discrete tomography an important role. Discrete tomography is dealing with the retrieval of information about objects from the data about its projections. One of the first problems that was discussed was the problem of reconstruction of a $01$ matrix from its column and row vectors \cite{Ryser}. 

Recently, an interesting application arose in systems biology. One of the fundamental question in systems biology is how a small number of signaling inputs specifies a large number of cell fates through the coordinated expression of thousand of genes.  In \cite{Biol} the authors used sequential logic to explain biological networks. The authors reduced the problem of determining the number of accessible configurations by introducing the connectivity matrix representation of a network to the problem of finding the number of matrices satisfying certain patterns. It turned out that the connectivity matrix of the ratchet model is a lonesum matrix. 

One of the crucial property of lonesum matrices is that they avoid the permutation matrices of size $2\times 2$. (A matrix \emph{avoids} a $2\times 2$ matrix $B$ if it does not contain it as a submatrix, i.e. there are no row and column indices $i,i',j,j'$ such that
$\left(\begin{array}{cc}
 a_{i,j} & a_{i,j'}\\
a_{i',j} &a_{i',j'} 
\end{array}\right)=B$.)

Ryser \cite{Ryser} showed that this is an alternative characterization of lonesum matrices.
\begin{theorem}(\cite{Ryser})
A $01$ matrix is lonesum if and only if it does not contain any of the two submatrices:
\begin{align*}
\begin{pmatrix}
1& 0\\
0& 1
\end{pmatrix}, 
\begin{pmatrix}
0& 1\\
1& 0
\end{pmatrix}.
\end{align*}
\end{theorem}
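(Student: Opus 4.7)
For the forward implication, I would exhibit an explicit alternative matrix. If $M$ contains rows $i,i'$ and columns $j,j'$ at which the submatrix is one of the two forbidden patterns, flipping those four entries converts that pattern into the other and produces a distinct $01$ matrix with identical row- and column-sum vectors. Hence $M$ is not uniquely determined by its marginals.

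For the converse, assume $M$ avoids both patterns. The crucial structural step is to show that the rows of $M$ are totally ordered by inclusion of support, and likewise the columns. Indeed, if rows $i$ and $i'$ had incomparable supports, any two columns $j,j'$ with $j \in \mathrm{supp}(i)\setminus\mathrm{supp}(i')$ and $j' \in \mathrm{supp}(i')\setminus\mathrm{supp}(i)$ would exhibit a $\begin{pmatrix}1&0\\0&1\end{pmatrix}$ or a $\begin{pmatrix}0&1\\1&0\end{pmatrix}$ depending on the relative order of $j$ and $j'$, contradicting avoidance. After reordering rows by decreasing row sum and columns by decreasing column sum (breaking ties arbitrarily), $M$ therefore acquires the Young-staircase form $m_{ij}=1$ iff $j \le r_i$, where $r_1\ge r_2\ge\dots\ge r_m$ are the sorted row sums; the resulting column sums $c_j=|\{i:r_i\ge j\}|$ are precisely the conjugate partition of $(r_i)$.

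For uniqueness, apply the same row and column permutations to any candidate $M'$ having the same row and column sums. Since $r_i=0$ for every $i>c_1$, the corresponding rows of $M'$ are also identically zero; column-sum equality at column $1$ (whose sum is $c_1$) then forces the $c_1$ ones of that column into rows $1,\dots,c_1$, so column $1$ of $M'$ matches column $1$ of $M$. Deleting this column reduces the problem to a smaller instance with row sums $(r_i-1)$ (zeros absorbed into the tail) and column sums $(c_2,\dots,c_n)$, again in conjugate-partition relation and still in staircase form on the $M$-side. Induction on the number of columns (or on $\sum r_i$) then concludes $M'=M$.

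The main obstacle is the chain-ordering step at the start of the converse, since this is the only point at which avoidance of \emph{both} forbidden patterns is essential: incomparable row supports produce exactly one of the two patterns depending on the indexing of the distinguishing columns, and both possibilities must be excluded in order to force the rows into a chain. Once chain-ordering is established, the rearrangement into Young-diagram form is immediate and the inductive uniqueness argument is routine.
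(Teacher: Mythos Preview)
Your forward implication is exactly the one-line argument the paper gives (switching a forbidden $2\times 2$ block to the other one preserves both marginals), so there is nothing to add there.

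For the converse, the paper does not actually supply a proof; it simply attributes the result to Ryser and moves on. Your argument is therefore strictly more than what the paper offers, and it is correct. The chain-ordering step is sound: incomparable row supports force one of the two forbidden $2\times 2$ patterns, so avoidance gives a total order on rows (and symmetrically on columns) by support inclusion; the staircase/Young-diagram normal form then follows immediately. The uniqueness-by-induction on columns is also fine: the identity $c_1=\lvert\{i:r_i\ge 1\}\rvert$ (conjugate-partition relation) pins down column~$1$ of any competitor $M'$, and the residual instance retains both the sorted-staircase form on the $M$ side and the conjugate relation, so the induction closes. It is worth noting that your structural step---rows and columns totally ordered by inclusion, hence a staircase after sorting---is precisely the description the paper exploits informally at the start of Section~2 (the ``column sequence'' and ``row sequence'' discussion), so while the paper never packages this as a proof of Theorem~1, your approach is entirely in its spirit.
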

One direction of this theorem is obvious: in the case of existence of one of the submatrices above we can switch it to the other one without changing the row and column sum vectors.  

Lonesum matrices have interesting combinatorial properties and they are in bijection with many other combinatorial objects. See \cite{BH1}, \cite{BH2} for an actual list of the related objects.  
As every matrix, lonesum matrix is a sequence of the column vectors. However, since in a lonesum matrix the two permutation matrices are forbidden, there are some constraints on the columns that can exist together in the same matrix. The avoidance of the two forbidden submatrices restricts the structure of these matrices. A lonesum matrix is uniquely determined by a pair of ordered partitions of the sets of column and row indices. Since this bijection is very important for us, we will describe it in details later.  

Brewbaker used these two interpretations to derive combinatorially two formulas for the number of lonesum matrices \cite{Brewbaker}. 
\begin{align*}
\sum_{m=0}^{\min(n,k)}m!{\stirling {n+1} {m+1} }m!{\stirling {k+1} {m+1}}=(-1)^n\sum_{m=0}^n(-1)^mm!{\stirling n m}(m+1)^k
\end{align*}
These numbers are the poly-Bernoulli numbers (with negative $k$ indices). Poly-Bernoulli numbers were introduced analytically in the 1990's by Kaneko \cite{Kaneko}. Since then there were an amount of generalizations and analogous of these numbers investigated.

The wealth of combinatorial properties, connections, and applications motivates us to investigate lonesum matrices in more details. For instance, considering lonesum matrices without any all-zero column revealed the connection to the well-studied research topic of permutation tableaux \cite{BH2}. In this paper we study lonesum matrices with restrictions on the number of columns and rows of the same type. We derive the double exponential function, an exact formula, a recursion and some properties of these matrices. We want to keep the paper self-contained, understandable also for readers who are not familiar in the topic. For this reason, we recall all the necessary informations, relevant identities with proofs, and describe the bijections on that the derivation of the formulas rely.

\section{Enumeration of restricted lonesum matrices}

We will need some terminology. Let $c(M)$ (resp.~ $r(M)$) denote the number of columns (resp.~ the number of rows) of the matrix $M$. We call the \emph{type} of the column (resp.~ row) the column (resp. row) itself, i.e., two columns have different types, if they differ at least at one position. Let $\mathcal{C}(M)$ (resp. $\mathcal{R}(M)$) denote the set of the columns (resp.~ rows) of different types in a lonesum matrix $M$. We call the sum of the entries (equivalently the number of $1$ entries) in a column (resp.~ row) the \emph{weight} of the column (resp.~ row). It can be shown that columns of different types in a lonesum matrix have different weights. Moreover, for any two columns $c_i$ and $c_j$ with $w(c_i)<w(c_j)$ $c_j$ has also $1$ entries at the positions of 1's in $c_i$. This property follows from the characterization of lonesum matrices with forbidden submatrices. Should $c_i$ namely have a $1$ at a position where $c_j$ has a $0$, one of the matrix of the forbidden set would appear. The same argument works for the rows. It is easy -  but important - to see that the number of columns of different type with weight at least one is equal to the number of rows of different types with weight at least one. It is meaningful to order the columns in $\mathcal{C}(M)$ (resp.~ $\mathcal{R(M)}$) according to their weights.  We call the sequence of the different column vectors (resp.~ row vectors) of a lonesum matrix that are ordered by their weight $(c_1,c_2,\ldots c_m)$ (resp.~ $(r_1,\ldots, r_m)$)the \emph{column sequence} (resp.~ \emph{row sequence}) of the matrix. In this order $c_{i+1}$ can be obtained from $c_{i}$ by switching some $0$s to $1$s.

Let $\mathcal{L}_{\leq d}$ denote the set of lonesum matrices without all-zero columns and rows that have at most $d$ columns (resp.~ rows) of the same type. $\mathcal{L}_{\leq d}(n,k)$ denotes the set of lonseum matrices with the properties given above of size $n\times k$. Our main theorem concerns with the enumeration of the set $\mathcal{L}_{\leq d}(n,k)$. Since partitions, in particular restricted partitions, play a crucial role in the next section we recall some facts about partitions and its counting sequence.    

\subsection{Restricted partitions}
The number of partitions of an $n$ element set into $m$ non-empty blocks is given by the Stirling numbers of the second kind ${\stirling n m}$. Restriction on the size of the blocks of the partition leads to the restricted Stirling numbers of the second kind. Partitions with this restrictions arose recently in several problems \cite{Zeilberger,Corcino,Mezo}. Let ${\stirling n m}_{\leq d}$  denote the number of partitions of an $n$ element set into $m$ non-empty blocks such that there is no block that consists of more than $d$ elements. We recall some properties according to \cite{Mezo}. 

Let $P_{m_1,\ldots, m_d}(n)$ denote the number of partitions of $n$ with $m_i$ blocks of size $i$. Then we have 
\begin{align*}
{\stirling n m }_{\leq d}= \sum_{\sum_{m_i=m;\atop {\sum im_i=n}}}P_{m_1,\ldots, m_d}(n).
\end{align*} 
A formula for $P_{m_1,\ldots, m_d}(n)$ can be derived using combinatorial argumentations the following way:
we associate to each permutation of $n$ a partition $P_{m_1,\ldots, m_l}(n)$. The first $m_1$ element are in the blocks of size $d$, the next $2m_2$ elements will form the blocks of size $2$, and so on. For $i$, we cut the $im_i$ elements into blocks the most natural way, the first $i$ element form one block, the next $i$ elements another block and so on. This way we associated the same partition to certain permutations. For instance, let $n=12$, $m_1=2$, $m_2=3$, $m_3=0$, $m_4=1$. Then to both permutations $\pi=8\,5\,2\,10\,3\,7\,6\, 12\, 1\, 4 \, 9\, 11$ and $\sigma=5\,8\,10\,2\, 12\,6\, 3\, 7\, 4\, 9\, 11\, 1$ the partition
\[P_{2,3,0,1}(12)=\{5\}\{8\}\{2,10\}\{3,7\}\{6,12\}\{1,4,9,11\}\]
 will be associated. The class of permutations with the same associated partition contains \[(1!)^{m_1}m_1!(2!)^{m_2}m_2!\cdots (d!)^{m_d}m_d!\] permutations, since permuting the elements inside a block and permuting the blocks of the same size does not influence/change the associated partition. Hence, we have
\begin{align*}
{\stirling n m }_{\leq d}=\sum_{\sum m_i=m; \atop {\sum im_i=n}}\frac{n!}{(1!)^{m_1}m_1!(2!)^{m_2}m_2!\cdots (d!)^{m_d}m_d!}.
\end{align*}
It is clear that if $d>n-m$ then ${\stirling n m}_{\leq d}={\stirling n m}$ and if $m<\lceil\frac{n}{d}\rceil$ then ${\stirling n m}_{\leq d}=0$.

The exponential generating function can be derived using the symbolic method \cite{Flajolet}. In the appendix we recall some ideas, definitions, translation rules of the symbolic method that we use in this paper. 
A partition is a set of sets, so taking the restriction on the number and size of the blocks also into consideration the construction is the following:
\begin{align*}
\mbox{SET}_{m}(\mbox{SET}_{>0,\leq d}(\mathcal{Z})),
\end{align*}
where $\mathcal{Z}$ is the atomic class, $\mbox{SET}_{m}$ denotes the class of $m$-sets, and $\mbox{SET}_{>0,\leq d}$ denotes the class of sets with sets of size greater than $0$ but at most $d$.
The symbolic method allows us to turn this construction immediately into the generating function: 
\begin{align*}
\sum_{n=0}^{\infty}{\stirling n m}_{\leq d}\frac{z^n}{n!}=\frac{(z+\frac{z^2}{2!}+\cdots+\frac{z^d}{d!})^m}{m!}.
\end{align*}
Using the notation introduced in \cite{Mezo}:
\[E_{d}(z)=\sum_{i=0}^d\frac{(z^i)}{i!},\]
the generating function can be written in the form:
\begin{align*}
\sum_{n=0}^{\infty}{\stirling n m}_{\leq d}\frac{z^n}{n!}=\frac{(E_{d}(z)-1)^m}{m!}.
\end{align*}
For calculations of values a recurrence relation could be helpful. 
A partition of $n$ elements can be obtained from a partition of $n-1$ elements by adding a new, the $n$th element to the partition.
It can form a single new block or we can add it to a block of size at least 1. In the first case we can do this in ${\stirling {n-1} {m-1}}_{\leq d}$ ways.
We obtain different kind of recurrence relations as we keep track of different properties of the block into that the $n$th element was added. 
When we just say that we put it into one of the remaining blocks, we have to reduce the $m{\stirling {n-1} m}_{\leq d}$ ways with the number of cases when we destroy the restriction on the size of the blocks. So, what is the number of these "bad" cases? Choose a $d$ element block (to put the $n$th element into this block) in ${n-1\choose d}$ ways and take an arbitrary partition of the remaining $n-d-1$ elements into $m-1$ blocks of size at most $d$.
Hence, 
\begin{align*}
{\stirling n m}_{\leq d}={\stirling {n-1} {m-1}}_{\leq d}+m{\stirling {n-1} m}_{\leq d}-{n-1 \choose d}{\stirling {n-d-1} {m-1}}_{\leq d}
\end{align*} 
We can consider exactly the blocksize of the block into that we the $n$th element add. Choose the block of size $i$ $(i=0,1,\ldots d-1)$, a set of size $i$ out of the $n-1$ elements, and the remaining $n-d-i$ elements will form a restricted partition. 
\begin{align*}
{\stirling n m}_{\leq d}=\sum_{i=0}^{d-1}{n-1 \choose i}{\stirling{n-1-i} {m-1}}_{\leq d}
\end{align*} 
The interested reader can find more identities involving restricted Stirling numbers of the second kind in \cite{Choi}, \cite{Mezo}. 
\subsection{The proof of the main theorem} 
Now we are ready to enumerate restricted lonesum matrices and prove the main theorem.
\begin{theorem}
The number of lonesum matrices without all-zero columns and rows that have at most $d$ columns (resp.~ rows) of the same type is given by the following formula: 
\begin{align*}
|\mathcal{L}_{\leq d}(n,k)|=\sum_{m=0}^{\min(n,k)}m!{\stirling n m}_{\leq d}m!{\stirling k m}_{\leq d}
\end{align*}
\end{theorem}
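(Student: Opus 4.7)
The plan is to build on the bijection mentioned in the introduction between lonesum matrices of size $n\times k$ with no all-zero row or column and pairs $(\pi,\sigma)$ of ordered set partitions, where $\pi$ is an ordered partition of the column index set $[k]$ into $m$ non-empty blocks and $\sigma$ is an ordered partition of the row index set $[n]$ into the same number $m$ of non-empty blocks, for some $m\ge 0$. Under this bijection two column indices lie in the same block of $\pi$ if and only if the corresponding columns coincide as vectors (have the same type); the analogous statement holds for rows and $\sigma$. This reflects the structural facts recalled just before the theorem: in a lonesum matrix the distinct column vectors are linearly ordered by weight and form a nested chain of $0/1$-patterns, and the same holds for the rows, so grouping columns by type and then ordering the groups by weight produces exactly an ordered partition of $[k]$, and likewise for $[n]$.

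Given this bijection, the condition defining $\mathcal{L}_{\leq d}(n,k)$, namely that there are at most $d$ columns (resp.\ rows) of any given type, translates directly into the condition that every block of $\pi$ (resp.\ of $\sigma$) has cardinality at most $d$. Enumerating restricted lonesum matrices therefore reduces to counting pairs of ordered partitions of the appropriate shape.

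The number of ordered partitions of $[k]$ into $m$ non-empty blocks each of size at most $d$ is $m!\stirling{k}{m}_{\leq d}$: one chooses an unordered such partition in $\stirling{k}{m}_{\leq d}$ ways and then linearly orders its $m$ blocks. The analogous count for $[n]$ is $m!\stirling{n}{m}_{\leq d}$. Since $\pi$ and $\sigma$ must share the same number of blocks $m$, and $m$ can range only from $0$ up to $\min(n,k)$, summing the product of these two counts over $m$ gives the claimed formula.

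The main obstacle is the explicit verification of the bijection. One direction requires showing that the avoidance of the two forbidden $2\times 2$ permutation matrices forces the type classes of columns (and of rows) to align into a single weight-ordered nested chain, so that grouping by type yields a genuine ordered partition. The converse direction requires producing from any pair $(\pi,\sigma)$ of ordered partitions of $[k]$ and $[n]$ into the same number $m$ of non-empty blocks a unique lonesum matrix; the natural rule is to place a $1$ in the entry at row $i$ and column $j$ exactly when the block index of $i$ in $\sigma$ plus the block index of $j$ in $\pi$ exceeds $m$, and then one checks both that this matrix is lonesum and that its column and row type classes recover $\pi$ and $\sigma$. Once this bijection is in hand, the counting argument above finishes the proof essentially as bookkeeping.
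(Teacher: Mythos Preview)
Your proposal is correct and follows essentially the same route as the paper: both reduce the count to pairs of ordered set partitions of $[n]$ and $[k]$ into the same number $m$ of blocks, with the restriction translating into blocks of size at most $d$. Your write-up is in fact slightly more complete, since you spell out the inverse map (the threshold rule $\sigma\text{-index}(i)+\pi\text{-index}(j)>m$) and the bookkeeping $m!\stirling{n}{m}_{\leq d}\cdot m!\stirling{k}{m}_{\leq d}$, whereas the paper relies on the staircase picture and leaves the inverse implicit.
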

\begin{proof}
Let $M\in \mathcal{L}_{\leq l}(n,k)$. Let $m$ denote the length of the column (resp.~ row) sequence, i.e. the number of different columns in the matrix $M$. Rearrange the columns and the rows of $M$ according their weights in decreasing order obtaining $M^*$. From the property of the column sequence and row sequence it follows that $M^*$ has a staircase shape. To a horizontal step in this staircase shape belong some indices, the indices of the columns which have their last $1$ entry at the height of this horizontal step. These columns have obviously not only the same weight, but are of the same type. Similarly, to each vertical step belong some indices, the indices of the rows which have their rightmost $1$ entry at the position of the particular vertical step.
In other words, to the columnsequence $(c_1,c_2,\ldots c_m)$ we can associate an ordered partition of $n$ into $m$ blocks that we obtain the following way: the $i$th block consists of the indices of the positions in that $c_{i-1}$ and $c_{i}$ differs, the positions where $c_{i-1}$ has $0$s and $c_{i}$  has $1$s. The first block contains the positions of $1$ entries of the column $c_1$. Analogously, to the row sequence we associate an ordered partition of $k$ into $m$ non-empty blocks.     
Hence, $M^*$ is uniquely determined by two ordered set partitions of the index sets $\{1,\ldots, n\}$ and $\{1,\ldots, k\}$ both into $m$ non-empty blocks. Since we have the restriction on the columns and rows, the blocks of the partitions consist of at most $d$ elements.
\end{proof}
\begin{example}
Consider the matrix $M$:
\[M=\begin{pmatrix}
0&0&0&0&0&0&0&0&1\\
1&0&0&0&0&1&1&0&1\\
1&0&0&1&0&1&1&1&1\\
1&1&1&1&1&1&1&1&1\\
0&0&0&0&0&0&0&0&1\\
1&0&0&1&0&1&1&1&1\\
1&1&1&1&1&1&1&1&1\\
1&0&0&1&0&1&1&1&1
\end{pmatrix}\]
\begin{figure}[h]
\begin{center}
\begin{picture}(10,8)
\put(0,0){\line(1,0){9}}
\put(0,8){\line(1,0){9}}
\put(0,0){\line(0,1){8}}
\put(9,0){\line(0,1){8}}
\thicklines
\put(0,0){\line(1,0){1}}
\put(1,0){\line(0,1){2}}
\put(1,2){\line(1,0){3}}
\put(4,2){\line(0,1){1}}
\put(4,3){\line(1,0){2}}
\put(6,3){\line(0,1){3}}
\put(6,6){\line(1,0){3}}
\put(9,6){\line(0,1){2}}
\put(0.4,0.2){${ 9}$}
\put(1.2,0.3){${ 1}$}
\put(1.2,1.3){${ 5 }$}
\put(1.4,2.2){${ 1}$}
\put(2.4,2.2){${ 6}$}
\put(3.4,2.2){${ 7}$}
\put(4.2,2.3){${ 2}$}
\put(4.3,3.2){${ 4}$}
\put(5.3,3.2){${ 8}$}
\put(6.2,3.3){${ 3}$}
\put(6.2,4.3){${ 6}$}
\put(6.2,5.3){${ 8}$}
\put(6.3,6.2){${ 2}$}
\put(7.3,6.2){${ 3}$}
\put(8.3,6.2){${ 5}$}
\put(9.2,6.3){${4}$}
\put(9.2,7.3){${ 7}$}
\put(2.5,5){\Huge{1}}
\put(6.5,1){\Huge{0}}
\end{picture}
\end{center}
\caption{The rearranged matrix $M^*$ and the associated set partitions}
\end{figure}

The two ordered partitions that are associated to the matrix $M$ are $9|176|48|235$ and $15|2|368|47$.
\end{example}

\begin{corollary}
Let $\mathcal{L}_{r\leq d}(n,k)$, (resp.~ $\mathcal{L}_{c\leq d}(n,k)$) denote the set of lonseum matrices of size $n\times k$ without all-zero columns and rows such that the number of rows (resp.~the columns) of the same type is at most $d$, while there is no restriction on the number of columns (resp.~rows) of the same type. Then we have
\begin{align*}
|\mathcal{L}_{r\leq d}(n,k)|&=\sum_{m=0}^{\min{(n,k)}}m!{\stirling n m}_{\leq d}m!{\stirling k m},\\
|\mathcal{L}_{c\leq d}(n,k)|&=\sum_{m=0}^{\min{(n,k)}}m!{\stirling n m}m!{\stirling k m}_{\leq d}. 
\end{align*}
\end{corollary}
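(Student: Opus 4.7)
The plan is to reuse the bijection from the proof of Theorem 2 and to observe that the two constraints on type-multiplicities act on the two associated ordered set partitions independently.

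First, I would invoke the bijection: each lonesum matrix in $\mathcal{L}(n,k)$ without all-zero rows or columns corresponds to a pair of ordered set partitions, one of $\{1,\ldots,n\}$ (coming from the column sequence) and one of $\{1,\ldots,k\}$ (coming from the row sequence), both into the same number $m\leq\min(n,k)$ of nonempty blocks. The refinement I need is to identify the multiset of block sizes of the partition of $\{1,\ldots,n\}$ with the multiset of multiplicities of the distinct row types, and symmetrically for the column side. This follows from the lonesum property: the rows at which $c_i$ has a $1$ while $c_{i-1}$ has a $0$ are precisely the rows whose minimal-weight $1$-column is $c_i$, so they share a common type, and every row type is realized by exactly one such block.

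With this in hand, the condition ``at most $d$ rows of the same type'' translates to requiring every block of the partition of $\{1,\ldots,n\}$ to have size at most $d$, while the partition of $\{1,\ldots,k\}$ remains unconstrained. Ordering the $m$ blocks on each side contributes the factors $m!\,m!$, and summing over $m$ from $0$ to $\min(n,k)$ yields the first formula, for $|\mathcal{L}_{r\leq d}(n,k)|$. The formula for $|\mathcal{L}_{c\leq d}(n,k)|$ follows by swapping the roles of rows and columns. I do not foresee a real obstacle: the correspondence between block sizes and type-multiplicities is already implicit in the proof of Theorem 2, and once it is stated explicitly the corollary drops out by restricting only one side of the bijection at a time.
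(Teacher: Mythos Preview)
Your proposal is correct and follows the same route the paper intends: the corollary is stated without proof precisely because it drops out of the bijection in Theorem~2 once one notes that the blocks of the ordered partition of $\{1,\ldots,n\}$ are exactly the equivalence classes of rows by type (and symmetrically for columns), so imposing the bound $d$ on only one side restricts only one of the two Stirling factors. Your explicit justification of the block--type correspondence is a helpful elaboration of what the paper leaves implicit.
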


Let $L_{\leq d}(x,y)$ be the double exponential generating function of the lonesum matrices that contain at most $d$ columns and rows of the same type and every row and column contains at least one 1 entry. 
 
\begin{align*}
L_{\leq d}(x,y)=\sum_{M\in \mathcal{L}_{\leq d}}\frac{x^{r(M)}}{r(M)!}\frac{y^{c(M)}}{c(M)!}.
\end{align*}
\begin{theorem} We have
\[
L_{\leq d}(x,y)=\frac{1}{E_{d}(x)+E_{d }(y)-E_{d}(x)E_{d}(y)}.
\]
\end{theorem}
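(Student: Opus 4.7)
The plan is to derive the bivariate EGF directly from the closed formula in the main theorem, using the EGF for restricted Stirling numbers of the second kind that was established just before the theorem.

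First I would start from the definition of the double EGF, substitute the main theorem's formula for $|\mathcal{L}_{\leq d}(n,k)|$, and swap the order of summation so that the outer sum is over $m$:
\begin{align*}
L_{\leq d}(x,y)=\sum_{n,k\geq 0}\sum_{m=0}^{\min(n,k)} m!\stirling{n}{m}_{\leq d}\, m!\stirling{k}{m}_{\leq d}\,\frac{x^n}{n!}\frac{y^k}{k!}=\sum_{m\geq 0} A_m(x)A_m(y),
\end{align*}
where I define $A_m(z)=\sum_{n\geq 0} m!\stirling{n}{m}_{\leq d}\,\frac{z^n}{n!}$.

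Next I would apply the identity already derived in the restricted-partitions subsection, namely $\sum_{n\geq 0}\stirling{n}{m}_{\leq d}\frac{z^n}{n!}=\frac{(E_d(z)-1)^m}{m!}$, which gives $A_m(z)=(E_d(z)-1)^m$. Plugging back in yields
\begin{align*}
L_{\leq d}(x,y)=\sum_{m\geq 0}\bigl((E_d(x)-1)(E_d(y)-1)\bigr)^m.
\end{align*}
Since $E_d(x)-1$ and $E_d(y)-1$ each have zero constant term, the product is a formal power series with zero constant term, so the geometric series is a well-defined formal power series and sums to $\frac{1}{1-(E_d(x)-1)(E_d(y)-1)}$. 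A short algebraic simplification of the denominator,
\begin{align*}
1-(E_d(x)-1)(E_d(y)-1)=E_d(x)+E_d(y)-E_d(x)E_d(y),
\end{align*}
then gives the claimed formula.

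There is no serious obstacle here: the argument is essentially bookkeeping, provided one is comfortable interchanging the two summations (legitimate because all terms are nonnegative / because we are manipulating formal power series where only finitely many $m$ contribute to any given monomial $x^n y^k$, since $\stirling{n}{m}_{\leq d}=0$ for $m>n$). The only point to be slightly careful about is checking that the geometric series is valid as a formal power series, which is immediate from the vanishing constant term noted above.
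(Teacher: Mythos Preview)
Your proof is correct. It differs from the paper's own argument in presentation rather than substance: the paper invokes the symbolic method directly, reading the class $\mathcal{L}_{\leq d}$ as the construction $\mathrm{SEQ}\bigl(\mathrm{SET}_{>0,\leq d}(\mathcal{X})\times\mathrm{SET}_{>0,\leq d}(\mathcal{Y})\bigr)$ and translating this in one step to $\frac{1}{1-(E_d(x)-1)(E_d(y)-1)}$, whereas you start from the counting formula of the main theorem, plug in the single-variable EGF of restricted Stirling numbers, and sum the resulting geometric series by hand. Both routes land on the same intermediate expression before the final simplification. The paper's version is slightly more conceptual (it works straight from the bijection with pairs of ordered restricted partitions, without passing through the explicit $m!\stirling{n}{m}_{\leq d}$ formula), while yours has the virtue of being self-contained for a reader not fluent in the symbolic dictionary, relying only on identities already proved in the paper.
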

\begin{proof}
Ordered pairs of restricted partitions can be viewed as a sequence of pairs of sets of size at least 1 and at most $d$ of an $[n]$ and a $[k]$ element set. Therefore, our construction is:
\begin{align*}
\mbox{SEQ}\left(\mbox{SET}_{>0,\leq d}(\mathcal{X})\times \mbox{SET}_{>0,\leq d}(\mathcal{Y})\right),
\end{align*} 
where $\mathcal{X}$ and $\mathcal{Y}$ are atomic classes. 
The symbolic method \cite{Flajolet} tells us that this construction translates into the generating function: 
\begin{align*}
L_{\leq d}(x,y)=\frac{1}{1-(E_{d}(x)-1)(E_{d}(y)-1)}.
\end{align*}
After simplification we obtain the theorem.
\end{proof}
The correspondence that was described in the proof of the main theorem (Theorem 2). implies that the matrices of the set $\mathcal{L}_{\leq d}$ can be characterized an alternative way. 
\begin{theorem} Let $M$ be a $01$ matrix. $M$ belongs to the set $\mathcal{L}_{\leq d}(n,k)$ if and only if the following three properties hold: 
\begin{itemize}
\item[i.] every column  and row consists at least one $1$ entry, i.e. the matrix does not contain any all zero columns and all zero rows, 
\item[ii.] two consecutive columns in the column sequence differ only at most by $d$ entries, i.e. $w(c_{i+1})-w(c_i)\leq d$ for all $i=1,\ldots, m-1$, where $m$ is the length of the column sequence,
\item[iii.]two consecutive rows in the row sequence differ only at most by $d$ entries, i.e. $w(r_{i+1})-w(r_i)\leq d$ for all $i=1,\ldots, m-1$, where $m$ is the length of the row sequence.
\end{itemize}
\end{theorem}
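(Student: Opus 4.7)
The plan is to lift the identification of Theorem~2 — which matches an element of $\mathcal{L}_{\leq d}(n,k)$ with a pair of ordered set partitions of $\{1,\dots,n\}$ and $\{1,\dots,k\}$ into blocks of size at most $d$ — into a statement about the combinatorial geometry of the staircase $M^*$. Specifically, I will express the block sizes of the two partitions directly as the weight-differences appearing in (ii) and (iii), and then read off the biconditional.

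The first step is to establish the key identity. Assume the column and row sequences of $M$ both have length $m$. In the staircase $M^*$, a row of type $r_j$ (ordered by increasing weight, so $r_1$ is lightest) is characterized by having $1$s exactly in the columns of the $j$ heaviest types, so the number $|B_j|$ of rows of type $r_j$ equals the number of row-positions in which $c_{m-j+1}$ has a $1$ but $c_{m-j}$ does not, i.e.\ $|B_j| = w(c_{m-j+1}) - w(c_{m-j})$, with the natural convention $c_0 \equiv 0$. Symmetrically, $|A_i| = w(r_{m-i+1}) - w(r_{m-i})$. Geometrically, these are the widths of the vertical steps and the heights of the horizontal steps of the staircase, as made visible by Example~1.

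From here both directions follow from Theorem~2. For the forward direction, if $M\in\mathcal{L}_{\leq d}(n,k)$ then (i) holds by definition, and the restriction of at most $d$ columns/rows of the same type means $|A_i|\leq d$ and $|B_j|\leq d$, which via the identity is exactly (ii) and (iii) after re-indexing. For the converse, if a lonesum matrix $M$ satisfies (i)--(iii), then its row and column sequences are well-defined thanks to (i), the associated pair $(A,B)$ of ordered partitions from Theorem~2 exists, and substituting (ii) and (iii) into the identity gives $|A_i|,|B_j|\leq d$, placing $M$ in $\mathcal{L}_{\leq d}(n,k)$.

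I expect the main obstacle to be the bookkeeping for the re-indexing between the sequence-orientation and the staircase-orientation: the column sequence is ordered by increasing weight while $M^*$ places the heaviest columns on the left, and similarly for rows. A mild subtlety is that (ii) and (iii) as stated cover only the consecutive differences $w(c_{i+1}) - w(c_i)$ for $i=1,\dots,m-1$; the initial weights $w(c_1)$ and $w(r_1)$ correspond to the $i=0$ case under the convention $c_0 = r_0 = 0$ and have to be read into the conditions to make the equivalence airtight. Once the indexing convention is fixed, no computation beyond counting cells along the staircase boundary is required.
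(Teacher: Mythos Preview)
Your proposal is correct and follows the same route as the paper, whose proof is the single sentence ``The theorem follows from the bijection given in the proof of Theorem~2.'' You have simply unpacked that bijection in detail.

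One simplification: the re-indexing you set up is more than you need. In the proof of Theorem~2 the $i$th block of the partition of $[n]$ associated to the column sequence is \emph{defined} as the set of positions where $c_{i-1}$ has $0$s and $c_i$ has $1$s, so its size is $w(c_i)-w(c_{i-1})$ on the nose, and this block is precisely a set of rows of a single type. Hence ``at most $d$ rows of each type'' is literally condition~(ii) (together with the $i=0$ case), with no reversal $j\mapsto m-j+1$ required; likewise for columns and~(iii). Your cross-identity $|B_j|=w(c_{m-j+1})-w(c_{m-j})$ is correct, but it composes the natural identification with an order-reversal and then undoes it.

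You are also right to flag the boundary case: as written, (ii) and (iii) omit the constraints $w(c_1)\leq d$ and $w(r_1)\leq d$, which are needed for the equivalence and should be read in via the convention $c_0=r_0=0$. The paper does not address this.
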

\begin{proof}
The theorem follows from the bijection given in the proof of theorem 2. 
\end{proof}

\begin{remark}
Poly-Bernoulli numbers of negative k indices enumerate the lonesum matrices of size $n\times k$.  In \cite{Mezo} the authors defined the incomplete poly-Bernoulli numbers analogously to the poly-Bernoulli numbers using restricted Stirling numbers of the second kind. This analytical definition does not lead to positive integers and does not count the lonesum matrices with the restriction given above.  
\end{remark}
However, because of the strong connection to poly-Bernoulli numbers we introduce for the number of the set $\mathcal{L}_{\leq d}(n,k)$ the notation $pB(n,k)_{\leq d}$.

The following two properties of $pB(n,k)_{\leq d}$ is immediate.
\begin{itemize}
\item[i.] The numbers $pB(n,k)_{\leq d}$ are in the parameters $n$ and $k$ symmetric: \[pB(n,k)_{\leq d} =pB(k,n)_{\leq d}\]
\item[ii.] $\mathcal{L}_{\leq d}(n,k)$ is not empty if and only if $\lceil{\frac{n}{d}}\rceil\leq k\leq nd $, i.e.
\[pB(n,k)_{\leq d}=0,\quad \mbox{if} k<\lceil{\frac{n}{d}}\rceil \quad \mbox{or} \quad k> nd\]
\end{itemize}
\begin{proof}
(i.) It is obvious from the combinatorial definition.

(ii.) The common length of the column sequence and row sequence, $m$ has to satisfy:
\[\max\left({\lceil\frac{n}{d}}\rceil,\lceil\frac{k}{d}\rceil\right)\leq m\leq \min(n,k).\]
\end{proof}
Next we derive a recursive relation, though this recursion is too complicated for practical use. It shows only the simple recursive structure of these matrices.   
\begin{theorem}The following recursive relation holds for the number of restricted lonesum matrices, $pB(n,k)_{\leq d}$, for $n\geq 1$ and $k\geq 1$:
\begin{align*}
pB(n,k)_{\leq d}=\sum_{n_i,k_j\in \{1,\ldots d\}\atop  i,j\in\{1,2\}} {n\choose n_1,n_2}{k\choose k_1,k_2} pB(n-(n_1+n_2),k-(k_1+k_2))_{\leq d},
\end{align*}
where ${n\choose n_1,n_2}$ denotes the multinomial coefficient.
\end{theorem}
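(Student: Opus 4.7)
The plan is to exploit the bijection from Theorem 2 between $\mathcal{L}_{\leq d}(n,k)$ and pairs $(\pi,\sigma)$ of ordered set partitions of $\{1,\ldots,n\}$ and $\{1,\ldots,k\}$ into the same number $m$ of blocks, each of size at most $d$. The recursion will arise by peeling off the first and last block of each partition, reducing to a smaller instance of the same combinatorial object and packaging the discarded data into two multinomial coefficients.

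Concretely, I start from $M \in \mathcal{L}_{\leq d}(n,k)$ with associated ordered partitions $\pi = \pi_1|\pi_2|\cdots|\pi_m$ and $\sigma = \sigma_1|\sigma_2|\cdots|\sigma_m$, and set $n_1 := |\pi_1|$, $n_2 := |\pi_m|$, $k_1 := |\sigma_1|$, $k_2 := |\sigma_m|$; the block-size constraint forces all four to lie in $\{1,\ldots,d\}$. The unique order-preserving bijections $\{1,\ldots,n\}\setminus(\pi_1 \cup \pi_m) \to \{1,\ldots,n-n_1-n_2\}$ and the analogous map on the column side carry the truncated pair $(\pi_2|\cdots|\pi_{m-1},\;\sigma_2|\cdots|\sigma_{m-1})$ to the data of a unique matrix in $\mathcal{L}_{\leq d}(n-n_1-n_2, k-k_1-k_2)$. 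Conversely, given such a smaller matrix together with an ordered pair of disjoint subsets of $\{1,\ldots,n\}$ of sizes $n_1, n_2$ (counted by $\binom{n}{n_1,n_2}$) and an ordered pair of disjoint subsets of $\{1,\ldots,k\}$ of sizes $k_1, k_2$ (counted by $\binom{k}{k_1,k_2}$), one reconstructs a unique pair $(\pi,\sigma)$ with $m \geq 2$. Summing over the admissible values of $n_1,n_2,k_1,k_2 \in \{1,\ldots,d\}$, with the usual convention $pB(0,0)_{\leq d}=1$ for the empty matrix (which corresponds to $m=2$ with both peeled blocks exhausting the index sets), yields the claimed formula.

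As an independent cross-check I would also derive the identity analytically by iterating the functional equation $L_{\leq d}(x,y) = 1 + (E_d(x)-1)(E_d(y)-1)\,L_{\leq d}(x,y)$, which follows directly from Theorem 4, twice and comparing the coefficient of $x^n y^k/(n!k!)$ on both sides; this produces exactly the multinomial sum in the statement. The real subtlety, and the place where I would spend the most care, is the boundary case $m=1$: such a matrix is the all-ones matrix, which lies in $\mathcal{L}_{\leq d}(n,k)$ precisely when $n,k\leq d$, and whose unique block is simultaneously "first" and "last" and so cannot be peeled off by the above procedure. The bijective argument therefore accounts for every matrix with $m\geq 2$; the recursion as stated is exact whenever $n>d$ or $k>d$, and in the remaining small range one must either include the indicator contribution of the all-ones matrix or restrict the statement accordingly.
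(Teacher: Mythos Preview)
Your argument is essentially the paper's own: both peel off the two extremal blocks from each of the ordered partitions associated to $M$. The paper phrases this on the matrix level---delete the all-$1$ columns and rows, after which the former lowest-weight columns and rows become all-zero and are implicitly removed as well---while you work directly with the pair $(\pi,\sigma)$; via the bijection of Theorem~2 these are the same operation, and your formulation is arguably cleaner (the paper's assignment of $n_1,n_2,k_1,k_2$ to weights of rows versus columns is somewhat tangled).

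Your flagging of the $m=1$ case is a genuine observation that the paper's proof passes over: when $1\le n\le d$ and $1\le k\le d$ the all-ones matrix contributes $1$ to $pB(n,k)_{\le d}$ but is not produced by the right-hand side (indeed for $n=k=1$ the right side is empty since $n_1+n_2\ge 2$). Your proposed analytic check makes this precise: iterating $L=1+(E_d(x)-1)(E_d(y)-1)L$ twice gives $L-1-(E_d(x)-1)(E_d(y)-1)=(E_d(x)-1)^2(E_d(y)-1)^2 L$, and extracting the coefficient of $x^ny^k/(n!\,k!)$ for $n,k\ge 1$ yields the stated sum plus an extra term $[\![1\le n\le d]\!]\,[\![1\le k\le d]\!]$ on the left. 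So the recursion as printed is exact precisely when $n>d$ or $k>d$, and your caveat is warranted.
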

\begin{proof}
Let $M$ be a matrix in $\mathcal{L}_{\leq d}(n,k)$ with column sequence $(c_1,\ldots c_m)$ and row sequence $(r_1,\ldots r_m)$. $c_m$ is an all-$1$ row, that differs from $c_{m-1}$ by at most $d$ positions. Similarly, for rows. Delete from the matrix $M$ the all-1 columns and rows obtaining $\widehat{M}$. Clearly, the column and row sequence of $\widehat{M}$ is obtained from the column and row sequence of $M$ by deleting $c_m$, $r_m$, $r_1$, and $c_1$. Let  $n_1=w(r_1)$, $k_1=w(c_1)$,  $n_2=w(r_{m})-w(r_{m-1})$, $k_2=w(c_{m})-w(c_{m-1})$. The $\widehat{M}$ is an arbitrary matrix of the set $\mathcal{L}_{\leq d}(n-n_1-n_2,k-k_1-k_2)$.
The positions of $1$ in $r_1$,$c_1$ can be choosen in ${n\choose n_1}$ resp. ${k\choose k_1}$ ways, and the positions of $1$ entries  in $c_m$, $r_m$ that are not in $c_{m-1}$ resp.~$r_{m-1}$ can be choosen in  ${n-n_1\choose n_2}$ resp.~${k-k_1\choose k_2}$ ways.   
\end{proof}
For the special case $d=1$ only the square matrix $n\times n$ exists and the theorem reduces to the fact that the number of lonesum matrices of size $n\times n$ where each column and each row differs is $(n!)^2$. It is maybe interesting that indeed such a lonesum matrix encodes a pair of permutations of $[n]$. 
\begin{example}The two permutations (written in one-line notation): $\pi=2471365$ and $\sigma=3156724$ is coded in the matrix $P$.
\begin{align*}
\begin{pmatrix}
1&0&1&1&1&1&1\\
0&0&0&0&1&1&0\\
1&1&1&1&1&1&1\\
0&0&0&0&1&0&0\\
1&0&1&0&1&1&1\\
1&0&1&0&1&1&1\\
1&0&1&0&1&1&0\\
0&0&1&0&1&1&0\\
0&0&0&0&1&0&0
\end{pmatrix}
\end{align*}
\end{example}

The special case $d=2$ may be of interest because it involves the Bessel numbers, the coefficients of the Bessel functions. An explicit formula is 
\[B(n,k)=\frac{n!}{2^{n-k}(2k-n)!(n-k)!}.\]
$B(n,k)$ \cite{OEIS} counts the number of involutions of $n$ with $k$ pairs. Equivalently, the $k$-matchings (matching with $k$ edges) of the complete graph $K_n$. We give in the next table the values of $pB(n,k)_{\leq 2}$ for small $n$ and $k$, i.e. the number of lonesum matrices in that a given type of columns (or rows) appear at most two times. 

\begin{table}[h]
\begin{center}
\begin{tabular}{|c|cccccc|}
\hline
n/k & 1 & 2 & 3 & 4 & 5 & 6 \\\hline
1 & 1 & 1 & &&&\\
2 & 1 & 4 & 12 & 12 & &\\
3& & 12 & 72 & 252 & 540 & 540 \\
4 & &12 & 252 & 1908 & 9000 & 2916\\
5 & & & 540 & 9000 & 80100 & 483300\\
6 & & & 540 & 29160 & 483300 & 4932900\\\hline   
\end{tabular}
\end{center} 
\caption{$pB(n,k)_{\leq 2}$}
\end{table}
\begin{remark}
The recent author investigates with J. L. Ramírez lonesum matrices with further restrictions on the number of columns and rows of the same type in a forthcoming paper \cite{Rami}. 
\end{remark}

\section{Appendix}
The symbolic method \cite{Flajolet} is a powerful method in modern combinatorics for the determination of generating functions. 
It develops a systematic translation mechanism between combinatorial constructions and operations on generating functions. This translation process is purely formal. A list of the translation rules of the core constructions translates into generating functions or into equations relating generating functions was given in \cite{Flajolet}. These basic constructions enable us to take into account structures that would be hard to deal with otherwise. 

A \emph{combinatorial class} is a set of objects with a notion of size attached so that the number of objects of each size in $\mathcal{A}$ is finite. A \emph{counting sequence} of a combinatorial class is the sequence of integers $(A_n)_{n\geq 0}$ where $A_n$ is the number of objects in class $\mathcal{A}$ that have size $n$. The exponential generating function of a class $\mathcal{A}$ is 
\begin{align*}
A(z)=\sum_{n\geq 0}A_n\frac{z^n}{n!}=\sum_{\alpha\in \mathcal{A}}\frac{z^{|\alpha|}}{|\alpha|!}.
\end{align*} 
It is said that the variable $z$ \emph{marks} the size in the generating function. $A_n=n![z^n]A(z)$, where $[z^n]$ denotes the coefficient of $z^n$ in the function $A(z)$.

An object is \emph{labeled} if each atom bears an integer label, in such a way that all the labels occurring in an object are distinct. A labeled object may be relabeled in a consistent way: the order relation among labels has to be preserved. 

The neutral object $\epsilon$ has size $0$ and has no label: \[\mathcal{E}=\{\epsilon\}\rightarrow E(z)=1. \]
The atomic class $\mathcal{Z}$ is formed of a unique object of size one and label 1:
 \[\mathcal{Z}=\{1\}\rightarrow \mathcal{Z}(z)=z.\]
\begin{itemize}
\item[$\mathcal{B}\cup\mathcal{C}$] The union of two classes consists of objects that are objects of the class $\mathcal{B}$ or of the class $\mathcal{C}$. The translation rule is:
\begin{align*}
\mathcal{A}=\mathcal{B}\cup\mathcal{C}\rightarrow A(z)=B(z)+C(z)
\end{align*}
\item[$\mathcal{B}\times\mathcal{C}$] The labeled product of $\mathcal{B}$ and $\mathcal{C}$ is obtained by forming pairs $(\beta,\gamma)$ with $\beta \in \mathcal{B}$, $\gamma\in \mathcal{C}$ and performing all possible order-consistent relabelings. The translation rule is:
\begin{align*}
\mathcal{A}=\mathcal{B}\times\mathcal{C}\rightarrow A(z)=B(z)C(z)
\end{align*}
\item[$\mbox{SEQ}_k(\mathcal{B})$] The $k$th power of $\mathcal{B}$ is defined as $(\mathcal{B}\times \cdots \times\mathcal{B})$ with $k$ factors equal to $\mathcal{B}$. The translation rule is:
\begin{align*}
\mathcal{A}=\mbox{SEQ}_(\mathcal{B})\rightarrow A(z)=B(z)^k
\end{align*}
\item[$\mbox{SEQ}(\mathcal{B})$] The sequence class of $\mathcal{B}$ is defined by 
\[\mbox{SEQ}(\mathcal{B}):=\{\epsilon\}\cup \mathcal{B}\cup \mathcal{B}\times \mathcal{B}\cup\cdots=\bigcup_{k\geq 0}\mbox{SEQ}_k(\mathcal{B}).\]The translation rule is:
\begin{align*}
\mathcal{A}=\mbox{SEQ}(\mathcal{B})\rightarrow A(z)=\sum_{k=0}^{\infty}B(z)=\frac{1}{1-B(z)}
\end{align*}
\item[$\mbox{SET}_k(\mathcal{B})$] The class of $k$-sets formed from $\mathcal{B}$ is a $k$-sequence modulo the equivalence relation that two sequences are equivalent when the components of one are the permutation of the components of the other. The translation rule is:
\begin{align*}
\mathcal{A}=\mbox{SET}_k(\mathcal{B})\rightarrow A(z)=\frac{1}{k!}B(z)^k
\end{align*}
\item[$\mbox{SET}(\mathcal{B})$] The class of sets formed from $\mathcal{B}$ is defined by
\[\mbox{SET}(\mathcal{B})=\{\epsilon\}\cup \mathcal{B} \cup \mbox{SET}_2(\mathcal{B})\cup \cdots =\bigcup_{k\geq 0}\mbox{SET}_k(\mathcal{B})\]
The translation rule is:
\begin{align*}
\mathcal{A}=\mbox{SET}(\mathcal{B})\rightarrow A(z)=\sum_{k=0}^{\infty} \frac{1}{k!}B(z)^k=e^{B(z)}
\end{align*}
\end{itemize}

A set partition is a set of sets: $\mbox{SET}(\mbox{SET}_{\geq 1}(\mathcal{Z}))$ and a surjection (ordered partition) is a sequence of sets: $\mbox{SEQ}(\mbox{SET}_{\geq 1}(\mathcal{Z}))$.

\end{document}